\newtheorem{theorem}{Theorem}[section]
\newtheorem{lemma}{Lemma}[section]
\newtheorem{corollary}{Corollary}[section]
\newenvironment{proof}
      {\medskip\noindent{\bf Proof:}\hspace{1mm}}
      {\hfill$\Box$\medskip}
\def\Ddots{\mathinner{\mkern1mu\raise\p@
\vbox{\kern7\p@\hbox{.}}\mkern2mu
\raise4\p@\hbox{.}\mkern2mu\raise7\p@\hbox{.}\mkern1mu}}
\def\e{\epsilon}
\title{\vspace{-0.7cm}An extremal theorem in the hypercube}
\author{David Conlon\thanks{St John's College, Cambridge CB2 1TP, United Kingdom.
E-mail: {\tt
D.Conlon@dpmms.cam.ac.uk}. Supported by a Junior Research Fellowship
at St John's College.}}
\date{}
\begin{document}
\maketitle

\begin{abstract}
The hypercube $Q_n$ is the graph whose vertex set is $\{0,1\}^n$ and where two vertices are adjacent if they differ in exactly one coordinate. For any subgraph $H$ of the cube, let $\textrm{ex}(Q_n, H)$ be the maximum number of edges in a subgraph of $Q_n$ which does not contain a copy of $H$. We find a wide class of subgraphs $H$, including all previously known examples, for which $\textrm{ex}(Q_n, H) = o(e(Q_n))$. In particular, our method gives a unified approach to proving that $\textrm{ex}(Q_n, C_{2t}) = o(e(Q_n))$ for all $t \geq 4$ other than $5$. 
\end{abstract}

\section{Introduction}

Given two graphs $G$ and $H$, $\textrm{ex}(G, H)$ is the maximum number of edges in a subgraph of $G$ which does not contain a copy of $H$. The study of such numbers was initiated by Paul Tur\'an \cite{T41} when he determined the maximum number of edges in a graph on $n$ vertices which does not contain a clique of size $r$, that is, $\textrm{ex}(K_n, K_r)$. For any graph $H$, the value of $\textrm{ex}(K_n, H)$ is known \cite{ES66} to be intimately connected to the chromatic number of $H$. In particular, it is known \cite{KST54} that $\textrm{ex}(K_n, H) = o(e(K_n))$ if and only if $H$ is bipartite. In this note, we will be similarly interested in determining subgraphs $H$ of the hypercube $Q_n$ for which $\textrm{ex}(Q_n, H) = o(e(Q_n))$.

The hypercube $Q_n$ is the graph whose vertex set is $\{0,1\}^n$ and where two vertices are adjacent if they differ in exactly one coordinate. This graph has $2^n$ vertices and, being $n$-regular, $2^{n-1} n$ edges. Erd\H{o}s \cite{E84} was the first to draw attention to Tur\'an-type problems in the cube when he asked how many edges a $C_4$-free subgraph of the cube can contain. He conjectured that the answer is $\left(\frac{1}{2} + o(1)\right) e(Q_n)$ and offered \$$100$ for a solution. 

Improving a long-standing result of Chung \cite{Ch92}, Thomason and Wagner \cite{TW09} recently gave an upper bound of roughly $ 0.6226 e(Q_n)$ for $\textrm{ex}(Q_n, C_4)$. This remains a long way from the lower bound of $\frac{1}{2} (n + \sqrt{n}) 2^{n-1}$ (valid when $n$ is a power of $4$), due to Brass, Harborth and Nienborg \cite{BHN95}. A related result of Bialostocki \cite{B83} implies that if the edges of the cube are $2$-coloured and neither colour class contains a $C_4$, then the number of edges in each colour is at most $\frac{1}{2} (n + \sqrt{n}) 2^{n-1}$. Therefore, at least in some sense, the construction of Brass, Harborth and Nienborg is optimal.

Erd\H{o}s \cite{E84, E90} also posed the problem of determining $\textrm{ex}(Q_n, C_{2t})$ for all $t > 2$, suggesting that perhaps $o(e(Q_n))$ was sufficient. This is false for $t = 3$. Indeed, Chung \cite{Ch92} and Brouwer, Dejter and Thomassen \cite{BDT93} found $4$-colourings of the cube without any monochromatic $C_6$ and, later, Conder \cite{Co93} found a $3$-colouring with the same property. This implies that $\textrm{ex}(Q_n, C_6) \geq \frac{1}{3} e(Q_n)$. On the other hand, it is known \cite{Ch92} that $\textrm{ex}(Q_n, C_6) \leq \left(\sqrt{2} - 1 +o(1)\right) e(Q_n)$.

For even $t \geq 4$, Chung \cite{Ch92} justified Erd\H{o}s' intuition by showing that $\textrm{ex}(Q_n, C_{2t}) = o(e(Q_n))$. However, until very recently, it was unknown whether a similar result holds for odd values of $t$. This problem was almost completely resolved by F\"uredi and \"Ozkahya \cite{FO09, FO209}, who showed that, for odd $t \geq 7$, 
$\textrm{ex}(Q_n, C_{2t}) = o(e(Q_n))$. The only case that now remains unresolved is $C_{10}$.  

Some progress on this problem has been made by Alon, Radoi\v{c}i\'c, Sudakov and Vondr\'ak \cite{ARSV06}, who proved that in any $k$-colouring of the edges of $Q_n$, for $n$ sufficiently large, there are monochromatic copies of $C_{10}$. Therefore, unlike $C_4$ and $C_6$, a counterexample to the conjecture that $\textrm{ex}(Q_n, C_{10}) = o(e(Q_n))$ cannot come from a colouring. On the other hand, Alon et al. \cite{ARSV06} and Axenovich and Martin \cite{AxM06} both note that there is a $4$-colouring of $Q_n$ which does not contain an induced monochromatic copy of $C_{10}$. 

More generally, Alon et al. \cite{ARSV06} gave a characterisation of all subgraphs $H$ of the cube which are Ramsey, that is, such that every $k$-edge-colouring of a sufficiently large $Q_n$ contains a monochromatic copy of $H$. It would be nice to have a similar characterisation of all graphs $H$ for which $\textrm{ex}(Q_n, H) = o(e(Q_n))$. Unfortunately, even deciding whether this is true for $C_{10}$ seems very difficult, so a necessary and sufficient condition is probably well beyond our grasp. Nevertheless, the purpose of this note is to present a natural sufficient condition. To state the condition precisely, we will need some notation.

Recall that the vertex set of the hypercube $Q_n$ is $\{0,1\}^n$. Any vertex, such as $[01101]$ in $Q_5$, can be written as a sequence of bits. Any edge can be written uniquely by a sequence such as $[01{*}11]$. The missing bit, known hereafter as the flip-bit, tells us that the edge connects the vertex where that missing bit is equal to $0$ with the vertex where it is $1$. In this case, $[01011]$ is connected to $[01111]$.  

We say that a subgraph $H$ of the hypercube has a $k$-partite representation if there exists $l$ such that 
\begin{itemize}
\item
$H$ is a subgraph of $Q_l$;

\item
every edge $e = [a_1 a_2 \dots a_l]$ in $H$ has exactly $k$ non-zero bits ($k-1$ ones and a flip-bit); 

\item
there exists a function $\sigma: [l] \rightarrow [k]$ such that, for each edge $e$, the image $\{\sigma(i_1), \dots, \sigma(i_k)\}$ of the set of non-zero bits $\{a_{i_1}, \dots, a_{i_k}\}$ of $e$ under $\sigma$ is $[k]$, that is, no two non-zero bits have the same image. 
\end{itemize}

Let $\mathcal{H}$ be the $k$-uniform hypergraph on vertex set $[l]$ with edge set $\{\tau(e) : e \in E(H)\}$, where the function $\tau$ is defined by mapping the set of non-zero bits $\{a_{i_1}, \dots, a_{i_k}\}$ of $e$ to the subset $\{i_1, \dots, i_k\}$ of $[l]$. We refer to $\mathcal{H}$ as a representation of $H$. By the definition of a $k$-partite representation, $\mathcal{H}$ must, unsurprisingly, be $k$-partite. The subsets $\sigma^{-1}(1), \dots, \sigma^{-1}(k)$ of $[l]$ will be referred to as the partite sets of the representation.

As an initial example, note that $C_8$ has a $2$-partite representation. This may be seen by taking $\sigma(1) = \sigma(3) = 1$ and $\sigma(2) = \sigma(4) = 2$ in the following example. 
\begin{quote}
\begin{tabular}{llllllllll}
$e_1 =$ & [1 & $*$ & 0 & 0] \hspace{30mm} & $e_5 =$ & [0 & 0 & 1 & $*$]\\
$e_2 =$ & [$*$ & 1 & 0 & 0] & $e_6 =$ & [0 & 0 & $*$ & 1]\\
$e_3 =$ & [0 & 1 & $*$ & 0] & $e_7 =$ & [$*$ & 0 & 0 & 1]\\
$e_4 =$ & [0 & $*$ & 1 & 0] & $e_8 =$ & [1 & 0 & 0 & $*$]
\end{tabular}
\end{quote}
Similarly, for any even $t \geq 4$, it is easy to see that $C_{2t}$ has a $2$-partite representation, namely, the cycle $C_t$ of length $t$. This observation is at the core of Chung's proof that $\textrm{ex}(Q_n, C_{2t}) = o(e(Q_n))$. 

For odd values of $t$, this is not true. However, for $t \geq 7$, these graphs do admit a $3$-partite representation. For example, in the case of $C_{14}$, the following representation works with $\sigma(1) = \sigma(4) = 1$, $\sigma(2) = \sigma(6) = 2$ and $\sigma(3) = \sigma(5) = \sigma(7) = 3$.
\begin{quote}
\begin{tabular}{llllllllllllllll}
$e_1 =$ & [1 & 1 & $*$ & 0 & 0 & 0 & 0] \hspace{12mm} & $e_8 =$ & [1 & $*$ & 0 & 0 & 1 & 0 & 0]\\
$e_2 =$ & [$*$ & 1 & 1 & 0 & 0 & 0 & 0] & $e_9 =$ & [1 & 0 & 0 & 0 & 1 & $*$ & 0]\\
$e_3 =$ & [0 & 1 & 1 & $*$ & 0 & 0 & 0] & $e_{10} =$ & [1 & 0 & 0 & 0 & $*$ & 1 & 0]\\
$e_4 =$ & [0 & 1 & $*$ & 1 & 0 & 0 & 0] & $e_{11} =$ & [1 & 0 & 0 & 0 & 0 & 1 & $*$]\\
$e_5 =$ & [0 & 1 & 0 & 1 & $*$ & 0 & 0] & $e_{12} =$ & [1 & 0 & 0 & 0 & 0 & $*$ & 1]\\
$e_6 =$ & [0 & 1 & 0 & $*$ & 1 & 0 & 0] & $e_{13} =$ & [1 & $*$ & 0 & 0 & 0 & 0 & 1]\\
$e_7 =$ & [$*$ & 1 & 0 & 0 & 1 & 0 & 0] & $e_{14} =$ & [1 & 1 & 0 & 0 & 0 & 0 & $*$]
\end{tabular}
\end{quote}
More generally, for all odd $t \geq 7$, $C_{2t}$ has a $3$-partite representation which is close to a hypergraph cycle. This representation is particularly simple when $t$ is a multiple of $3$, when it corresponds to the tight cycle, that is, the $3$-uniform hypergraph on $t$ vertices $v_1, \dots, v_t$ whose edge set is 
\[\{v_1 v_2 v_3, \ v_2 v_3 v_4, \ v_3 v_4 v_5, \ \dots, \ v_{t-1} v_t v_1, \ v_t v_1 v_2\}.\]

On the other hand, $C_4$, $C_6$ and $C_{10}$ do not admit $k$-partite representations for any $k$. The importance of these considerations lies in the following theorem.

\begin{theorem} \label{IntroMain}
Let $H$ be a fixed subgraph of the cube. If, for some $k$, $H$ admits a $k$-partite representation, then
\[\textrm{ex}(Q_n, H) = o(e(Q_n)).\]
\end{theorem}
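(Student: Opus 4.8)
The plan is to reduce the problem to the classical theorem of Erd\H{o}s that every $k$-partite $k$-uniform hypergraph has vanishing Tur\'an density: a $k$-uniform hypergraph with parts of size $N$ and at least $\delta N^k$ edges contains a partition-respecting copy of any fixed $k$-partite $k$-graph once $N$ is large. Fix $\epsilon>0$ and let $G\subseteq Q_n$ have at least $\epsilon\,e(Q_n)$ edges; the goal is to show $G$ contains a copy of $H$ for $n$ large. An injective labelling $\phi:[l]\to[n]$ of coordinates together with a base vertex $b\in\{0,1\}^n$ determines a \emph{subcube copy} of $H$, namely the image of the isometric embedding $Q_l\hookrightarrow Q_n$ that sends coordinate $p$ to $\phi(p)$ and translates by $b$; any such image is genuinely isomorphic to $H$, so it suffices to produce one pair $(\phi,b)$ all of whose $H$-edges lie in $G$.

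To build the auxiliary hypergraph I split $[n]$ into $k$ blocks $B_1,\dots,B_k$, intending the partite set $V_j=\sigma^{-1}(j)$ of the representation $\mathcal H$ to be embedded into $B_j$. For a base vertex $b$ I define a $k$-partite $k$-uniform hypergraph $\mathcal G_b$ on $B_1\sqcup\dots\sqcup B_k$ whose hyperedges are the transversals $T=\{d_1,\dots,d_k\}$ for which every one of the $k$ edges $\{v_T,v_T\oplus e_{d_i}\}\in G$, where $v_T:=b\oplus\mathbf 1_T$ and $e_{d}$ is the $d$th unit vector. Demanding all $k$ directions makes $\mathcal G_b$ a genuine uniform $k$-graph, and a partition-respecting copy of $\mathcal H$ in $\mathcal G_b$ produces the desired pair: taking $\phi$ to be the injection $f:[l]\to[n]$ read off from the copy, each edge $e$ of $H$ with representation $\tau(e)$ and flip-bit $p$ maps to the $Q_n$-edge in direction $\phi(p)$ at $v_{\phi(\tau(e))}$, which is present by construction, while injectivity of $f$ guarantees that the vertices of the purported copy are all distinct. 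This is exactly where the $k$-partite representation is used: it makes $\mathcal H$ a $k$-partite $k$-graph to which Erd\H{o}s's theorem applies.

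The heart of the argument is the density transfer: I must locate a single $b$ for which $\mathcal G_b$ has $\delta N^k$ edges, $N=n/k$. Writing $\deg_j(u)$ for the number of block-$j$ directions $d$ with $\{u,u\oplus e_d\}\in G$, a double count gives
\[
\sum_{b}|\mathcal G_b| \;=\; \sum_{u\in\{0,1\}^n}\ \prod_{j=1}^{k}\deg_j(u),
\]
since for fixed $T$ the map $b\mapsto v_T$ is a bijection of $\{0,1\}^n$. I then choose the block partition at random: taking expectation over the partition turns $\prod_j\deg_j(u)$ into essentially $k^{-k}\deg_G(u)^k$, and convexity across the vertices, together with $\sum_u\deg_G(u)=2e(G)\ge \epsilon n\,2^{n}$, bounds the right-hand side below by $c_k\,2^{n}(\epsilon n)^k$ for a constant $c_k>0$. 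Hence some pair (partition, base vertex) achieves $|\mathcal G_b|\ge \delta N^{k}$ for a constant $\delta=\delta(\epsilon,k)>0$ independent of $n$.

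Finally, since $N=n/k\to\infty$ while $\delta$ stays fixed, Erd\H{o}s's theorem yields a partition-respecting copy of $\mathcal H$ in $\mathcal G_b$ for $n$ large, which lifts to a copy of $H$ in $G$; thus $\textrm{ex}(Q_n,H)<\epsilon\,e(Q_n)$ eventually, giving $\textrm{ex}(Q_n,H)=o(e(Q_n))$. I expect the main obstacle to be precisely this density transfer: a hyperedge of $\mathcal G_b$ is a conjunction of $k$ correlated edge-events at a common vertex, and keeping the product $\prod_j\deg_j(u)$ from being destroyed by an adversarial concentration of $G$'s edges into few directions is exactly what the random block partition must accomplish. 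The remaining care lies in the bookkeeping of which $Q_n$-edges a lifted copy of $\mathcal H$ actually uses and in checking that distinct vertices of $H$ remain distinct after lifting, both of which follow from the injectivity of the embedding but should be verified explicitly.
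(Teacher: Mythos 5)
Your proof is correct, and its skeleton is the same as the paper's: build an auxiliary $k$-uniform hypergraph whose hyperedges record $k$ coordinate directions all of whose edges at a common cube vertex lie in $G$, invoke Erd\H{o}s's theorem that $k$-partite $k$-uniform hypergraphs have vanishing Tur\'an density, and lift the resulting copy of $\mathcal{H}$ back to a copy of $H$ (your map $u \mapsto b \oplus \mathbf{1}_{f(\mathrm{supp}(u))}$ is exactly the paper's embedding with $b$ generalising the indicator of the fixed set $S$, and your observation that requiring all $k$ directions in a hyperedge is what makes the lift work for every flip-bit matches the paper's condition $I \subseteq D(J)$). Where you genuinely diverge is the density transfer. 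The paper first throws away edges outside the middle levels (its Lemma \ref{DenseDiag}), fixes a single dense level pair $(j, j+1)$, applies convexity of $\binom{x}{k}$ over the upper vertices of that level, and then averages over the residual sets $S = J \setminus I$ to pin down one background set. You instead average over all $2^n$ translates $b$ together with a random partition of the coordinates into $k$ blocks, using the exact identity $\sum_b |\mathcal{G}_b| = \sum_u \prod_j \deg_j(u)$ and Jensen; this is translation-invariant rather than level-based, so you never need the middle-levels estimate at all, and it hands you a genuinely $k$-partite auxiliary hypergraph for free. That is a real simplification of the bookkeeping. Three small points to tidy, none of them gaps: the expectation over the random partition gives the falling factorial $\binom{D_u}{k}k!/k^{-k}$ --- more precisely $\binom{D_u}{k}k!\,k^{-k}$ --- rather than $k^{-k}D_u^k$, so the convexity step should be run on the truncated function $\binom{x}{k}$ (zero below $k$), exactly as the paper does; under independent random assignment the blocks are not exactly of size $n/k$, so either use a random equipartition or simply apply the non-partite form of Erd\H{o}s's theorem (the paper's Lemma \ref{Erdos}) to $\mathcal{G}_b$ viewed on $n$ vertices; and the partition-respecting property of the copy of $\mathcal{H}$, while available, is not actually needed for the lift --- any injective copy of $\mathcal{H}$ in $\mathcal{G}_b$ lifts, since the lifting argument uses only injectivity of $f$ and membership of each $f(\tau(e))$ in $\mathcal{G}_b$.
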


We therefore have a unified proof that $\textrm{ex}(Q_n, C_{2t}) = o(e(Q_n))$ for all $t \geq 4$ other than $5$. However, the theorem plainly applies to a much wider class of graphs than cycles. If, for example, we add the edge $e_{15} = [1100{*}00]$ to the $C_{14}$ given above, we get a $3$-partite representation of a $C_{14}$ with a long diagonal, reproving another result due to F\"uredi and \"Ozkahya \cite{FO09}. 

For the sake of clarity of presentation, we will systematically omit floor and ceiling signs whenever they are not crucial. We also do not make any serious attempt to optimize absolute constants in our statements and proofs.

\section{Subgraphs of the cube with zero Tur\'an density}

We will need a simple estimate stating that almost all vertices in the cube $Q_n$ have roughly the same number of zeroes and ones. Since there is a one-to-one correspondence between vertices of $Q_n$ and subsets of the set $[n] = \{1, 2, \dots, n\}$, the required estimate follows from the following lemma.

\begin{lemma} \label{DenseDiag}
The number of subsets of $[n]$ containing fewer than $n/4$ or more than $3n/4$ elements is at most $(1.9)^n n$.  
\end{lemma}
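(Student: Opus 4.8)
The plan is to bound the number of "unbalanced" subsets by a Chernoff-type tail estimate and then absorb everything into the clean factor $(1.9)^n n$. Writing the count as a sum of binomial coefficients
\[
N = \sum_{i < n/4} \binom{n}{i} + \sum_{i > 3n/4} \binom{n}{i},
\]
I would first note the symmetry $\binom{n}{i} = \binom{n}{n-i}$, so the two tails are equal and it suffices to show $\sum_{i < n/4} \binom{n}{i} \le \tfrac12 (1.9)^n n$, or more crudely that each tail is at most $(1.9)^n n / 2$.

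The key step is to control a single lower tail $\sum_{i \le n/4} \binom{n}{i}$. Here I would invoke the standard entropy bound $\sum_{i \le \alpha n} \binom{n}{i} \le 2^{H(\alpha) n}$ for $\alpha \le 1/2$, where $H(\alpha) = -\alpha \log_2 \alpha - (1-\alpha)\log_2(1-\alpha)$ is the binary entropy function. With $\alpha = 1/4$ one computes $H(1/4) = 2 - \tfrac34 \log_2 3 \approx 0.8113$, so $2^{H(1/4)} \approx 1.7549 < 1.9$. Thus each tail is at most $(1.7549)^n$, and the two tails together are at most $2 \cdot (1.7549)^n$, which is comfortably below $(1.9)^n n$ for all $n \ge 1$ since $(1.9/1.7549)^n n \ge 2$ holds trivially once $n \ge 1$.

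If I preferred to avoid quoting the entropy bound as a black box, I would instead prove the tail estimate directly: crudely bound the number of terms in the lower tail by $n$ and each binomial coefficient by the largest one, $\binom{n}{\lfloor n/4\rfloor}$, giving $\sum_{i \le n/4}\binom{n}{i} \le n \binom{n}{n/4}$; then estimate $\binom{n}{n/4}$ via Stirling (or via the inequality $\binom{n}{\alpha n} \le 2^{H(\alpha) n}$ applied to a single coefficient, which follows from expanding $1 = (\alpha + (1-\alpha))^n \ge \binom{n}{\alpha n}\alpha^{\alpha n}(1-\alpha)^{(1-\alpha)n}$). This self-contained route yields the same numerical constant $1.7549^n$ up to the polynomial factor already present in the statement.

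The main obstacle is purely arithmetic rather than structural: one must confirm that the entropy constant $2^{H(1/4)}$ really is strictly less than $1.9$, and then check that the leftover slack—the ratio $(1.9/1.7549)^n$ together with the factor $n$ in the statement—swallows the constant $2$ coming from combining the two tails. Both are routine, and the generous gap between $1.7549$ and $1.9$ means no delicate estimation is needed; the polynomial factor $n$ in the target bound is in fact far more generous than required, which is presumably why the author states the result in this slightly loose but convenient form.
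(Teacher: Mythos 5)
Your proof is correct, and its headline route differs from the paper's in the choice of key estimate, though the overall structure (symmetry, then a tail bound) is the same. The paper argues: by symmetry the count is at most $2\sum_{i \leq n/4}\binom{n}{i} \leq n\binom{n}{n/4}$, and then bounds the single coefficient via $\binom{n}{n/4} \leq n^{n/4}/(n/4)! \leq (4e)^{n/4}$ using $j! \geq (j/e)^j$, finishing with $(4e)^{1/4} \leq 1.9$. You instead invoke the entropy bound $\sum_{i \leq \alpha n}\binom{n}{i} \leq 2^{H(\alpha)n}$, which gives the sharper constant $2^{H(1/4)} \approx 1.755$ versus the paper's $(4e)^{1/4} \approx 1.816$, and which makes the polynomial factor $n$ in the target genuinely superfluous (you need it only to absorb the factor $2$ from the two tails); the cost is quoting a black-box inequality where the paper is self-contained. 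Your fallback route in the third paragraph --- bounding the tail by $n\binom{n}{n/4}$ and estimating the single coefficient from $1 = (\alpha + (1-\alpha))^n$ --- is essentially the paper's proof with $2^{H(1/4)n}$ in place of $(4e)^{n/4}$. One trivial slip: your closing claim that $(1.9/1.7549)^n\, n \geq 2$ ``holds trivially once $n \geq 1$'' fails at $n = 1$, where the left side is about $1.08$; but this is immaterial, since at $n=1$ the lemma's conclusion itself fails (the count is $2 > 1.9$), and both your proof and the paper's are really statements for $n$ beyond a small threshold, consistent with the paper's declared convention of ignoring floors and ceilings and with the purely asymptotic use of the lemma in the main theorem.
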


\begin{proof}
By symmetry, it is easy to see that the number of subsets of $[n]$ with fewer than $n/4$ elements is the same as the number of subsets with more than $3n/4$ elements. Therefore, the number of subsets with fewer than $n/4$ elements or more than $3n/4$ elements is at most
\[2 \sum_{i=0}^{\lfloor n/4 \rfloor} \binom{n}{i} \leq n \binom{n}{n/4} \leq (4 e)^{n/4} n,\]
where we used the estimate $j! \geq \left(j/e\right)^j$ with $j = n/4$. The result follows since $(4 e)^{1/4} \leq 1.9$.
\end{proof}

\begin{theorem} \label{Main}
Let $H$ be a subgraph of the cube with $k$-partite representation $\mathcal{H}$. Suppose $\textrm{ex}(K_n^{(k)}, \mathcal{H}) \leq \alpha n^k$. Then
\[\textrm{ex}(Q_n, H) = O(\alpha^{1/k} 2^n n).\]
\end{theorem}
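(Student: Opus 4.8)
The plan is to prove the contrapositive: I will show that if $G\subseteq Q_n$ has at least $C\alpha^{1/k}2^n n$ edges, for a suitable constant $C=C(k)$, then $G$ contains a copy of $H$. Such a copy will be exhibited as an embedding of $Q_l$ (which contains $H$) into $Q_n$ as a subcube, specified by an injection $\phi:[l]\to[n]$ together with a background vertex $w$: coordinate $\phi(i)$ of an image is read off from $Q_l$, while the coordinates outside $\mathrm{im}(\phi)$ take the value of $w$. The aim is to produce $\phi$ and $w$ so that the image of every edge of $H$ lands in $G$.

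First I would pass to the middle layers. Let $G'$ consist of the edges of $G$ whose endpoints both lie in layers between $n/4$ and $3n/4$. By Lemma~\ref{DenseDiag} at most $(1.9)^n n^2$ edges are discarded, which is negligible beside $\alpha^{1/k}2^n n$ since $\alpha$ is only polynomially small for a fixed $\mathcal H$; hence $e(G')\ge\tfrac12 e(G)$.

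The heart of the argument attaches an auxiliary hypergraph to each candidate background. For a vertex $w$ put $W_w=\{j:w_j=0\}$ and define a $k$-uniform hypergraph $\mathcal G_w$ on $W_w$, declaring a $k$-set $S\subseteq W_w$ to be an edge precisely when, for every $j\in S$, the cube edge joining $\mathrm{supp}(w)\cup S$ to its down-neighbour in direction $j$ belongs to $G'$. The point of this definition is that a copy of $\mathcal H$ in $\mathcal G_w$, say with embedding $\phi:[l]\to W_w$, lifts to a copy of $H$ in $G'$ with background $w$: because $\phi$ lands in $W_w$, the background vanishes on every active coordinate, so the image of each edge $e$ of $H$ is exactly one of the down-edges certified by the hyperedge $\phi(\tau(e))$, and therefore lies in $G'$. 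Consequently it is enough to find a single middle $w$ with $e(\mathcal G_w)>\alpha|W_w|^k$, for then, as $|W_w|\le n$, the hypothesis $\mathrm{ex}(K^{(k)}_{|W_w|},\mathcal H)\le\alpha|W_w|^k$ forces $\mathcal H\subseteq\mathcal G_w$.

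Such a $w$ I would find by averaging, and here lies the decisive step. A pair $(w,S)$ with $S\in E(\mathcal G_w)$ is the same datum as a pair $(u,S)$ where $u=\mathrm{supp}(w)\cup S$ and $S$ is a $k$-subset of the set $D(u)$ of down-directions of $u$ that lie in $G'$; writing $d^-(u)=|D(u)|$, this reparametrisation gives $\sum_w e(\mathcal G_w)=\sum_u\binom{d^-(u)}{k}$. Since $\sum_u d^-(u)=e(G')$ and $\binom{\cdot}{k}$ is convex, this is at least $2^n\binom{\bar d}{k}$ with $\bar d=e(G')/2^n\ge\tfrac{C}{2}\alpha^{1/k}n$, hence at least $c_k C^k\,\alpha n^k 2^n$ for some $c_k>0$. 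Dividing by the at most $2^n$ backgrounds, the average of $e(\mathcal G_w)$ exceeds $\alpha n^k\ge\alpha|W_w|^k$ once $C$ is chosen large in terms of $k$, which yields the required $w$ and completes the proof. I expect the main obstacle to be exactly this design of $\mathcal G_w$ and its reparametrisation: one has to see that the background must vanish on $\mathrm{im}(\phi)$ (which is what dictates the vertex set $W_w$), and that grouping the pairs $(w,S)$ by the top vertex $u$ turns the demand that all $k$ incident down-edges be present into the binomial $\binom{d^-(u)}{k}$ — the precise mechanism that produces the exponent $1/k$.
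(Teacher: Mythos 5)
Your proposal is correct and is essentially the paper's own argument in a different notation: your pairs $(u,S)$ grouped by the background vertex $w$ are exactly the paper's pairs $(I,J)$ grouped by the background set $S = J \setminus I$, your down-degree $d^-(u)$ is the paper's $d(J)$, your hypergraph $\mathcal{G}_w$ is the paper's $\mathcal{I}$, and the convexity-of-$\binom{x}{k}$ step, the application of the extremal hypothesis, and the lifting of $\mathcal{H} \subseteq \mathcal{G}_w$ to a copy of $H$ in $G$ all coincide with the paper's. The only organizational difference is that the paper first pigeonholes a single dense pair of adjacent levels $(j,j+1)$ and then averages over background sets of fixed size $j+1-k$ (requiring the identity $\binom{n}{j+1}\binom{j+1}{k} = \binom{n}{j+1-k}\binom{n-j-1+k}{k}$), whereas you average over all $2^n$ background vertices at once, a slightly cleaner normalization in which the level is fixed implicitly by the choice of $w$.
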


\begin{proof} Suppose that we have a subgraph $G$ of $Q_n$ with density $\e = 16 k \alpha^{1/k}$. We will prove that, for $n$ sufficiently large, $G$ must contain a copy of $H$. By Lemma \ref{DenseDiag}, the number of vertices containing fewer than $n/4$ or more than $3 n/4$ ones is at most $(1.9)^n n$. Therefore, since each vertex has maximum degree $n$, the number of edges between levels $i$ and $i+1$, added over all $i$ for which $i < n/4$ or $i \geq 3n/4$, is at most $(1.9)^n n^2$. Since there are $2^{n-1} n$ edges in all, the density contribution of these edges is at most $2 (0.95)^n n$, which, for $n$ sufficiently large, is less than $\frac{\e}{2}$. Therefore, $G$ has a density of at least $\frac{\e}{2}$ concentrated between levels $n/4$ and $3n/4$. In particular, there exists some $j$ with $n/4 \leq j < 3n/4$ such that the density of edges in $G$ between levels $j$ and $j+1$ is at least $\frac{\e}{2}$. 

Every edge between levels $j$ and $j+1$ may be represented by a collection of $j$ ones and a flip-bit. By the choice of $j$, there are at least  $\frac{\e}{2} (n - j) \binom{n}{j} \geq \frac{\e j}{2} \binom{n}{j+1}$ edges of $G$ between levels $j$ and $j+1$. Given a subset $J$ of $[n]$ of size $j + 1$, let $d(J)$ be the number of edges for which the union of the flip-bit and the set of ones is $J$. Since $\sum d(J)$ is the number of edges between levels $j$ and $j+1$, $\mathbb{E} (d(J)) \geq \frac{\e j}{2}$. Therefore, by convexity of the function $\binom{x}{k}$, we have
\[\sum_{J \subset [n], |J| = j+1} \binom{d(J)}{k} \geq \binom{n}{j+1} \binom{\mathbb{E} (d(J))}{k} \geq \binom{n}{j+1} \binom{\e j/2}{k}.\]
For any subset $J$ of $[n]$ of size $j+1$, let $D(J) \subset J$ be the set of positions where replacing an element of $J$ with a flip-bit yields an edge. The previous equation tells us that there are at least $\binom{n}{j+1} \binom{\e j/2}{k}$ pairs $(I,J)$ for which $J$ has size $j+1$, $I \subset D(J)$ and $|I| = k$. Since there are $\binom{n}{j + 1 - k}$ ways of choosing a subset of $n$ of size $j + 1 - k$, we see that there must be some set $S$ of size $j+1-k$ for which at least 
\[\binom{n}{j+1} \binom{\e j/2}{k}/\binom{n}{j+1-k} = \binom{n-j-1+k}{k} \binom{\e j/2}{k} /\binom{j+1}{k}\]
pairs $(I, J)$ have $J \char92 I = S$. Here we used the identity $\binom{n}{j + 1} \binom{j + 1}{k} = \binom{n}{j + 1 - k} \binom{n - j - 1 + k}{k}$. 

Fixing $S$, we see that the pair $(I, J)$ is uniquely determined by the choice of $I$. Let $\mathcal{I}$ be the $k$-uniform hypergraph whose edges are the sets $I$ taken from these pairs. Since $n \geq \alpha^{-1/k} = 16 k/\e$ and $j \geq n/4$, we have $j \geq 4 k/\e$. This in turn implies that $\binom{\e j/2}{k}/\binom{j+1}{k} \geq \left(\frac{\e}{4}\right)^k$. Therefore, since $j < 3n/4$, the number of edges in $\mathcal{I}$ is at least
\[\left(\frac{\e}{4}\right)^k \binom{n - j -1 + k}{k} \geq \left(\frac{\e}{16}\right)^k \frac{n^k}{k!}.\]
Hence, since $\left(\frac{\e}{16}\right)^k \geq k!\alpha$, the hypergraph $\mathcal{I}$ contains a copy of $\mathcal{H}$. Suppose that $\mathcal{H}$ is defined on vertex set $[l]$ and the mapping $g: [l] \rightarrow [n]$ describes the embedding of $\mathcal{H}$ in $\mathcal{I}$. 

We define a map $f: Q_l \rightarrow Q_n$ by mapping $[a_1 \dots a_l]$, with non-zero bits $a_{i_1}, \dots, a_{i_r}$, to $[b_1 \dots b_n]$, where $b_i = 1$ if and only if $i \in S \cup \{g(i_1), \dots, g(i_r)\}$. It is straightforward to verify that this is a graph isomorphism between $Q_l$ and $f(Q_l)$. Moreover, for every edge $e = uv \in H$, the edge $f(u)f(v)$ is in $G$. 
To see this, suppose that the non-zero bits of $e$ are $a_{i_1}, \dots, a_{i_k}$ and $a_{i_{\ell}}$ is the flip-bit. Let $J = S \cup \{g(i_1), \dots, g(i_k)\}$. By construction, $I = \{g(i_1), \dots, g(i_k)\} \in \mathcal{I}$ and, therefore, $I \subset D(J)$. 
Hence, by the definition of $D(J)$, the edge formed by replacing $b_{g(i_{\ell})}$ in $f(u)$ with a flip-bit is in $G$. But this edge is just the edge between $f(u)$ and $f(v)$. We therefore have an embedding of $H$ in $G$, completing the proof.
\end{proof}

To complete the proof of Theorem \ref{IntroMain}, we only need to apply the following classical result of Erd\H{o}s \cite{E64} regarding the extremal number of complete $k$-partite $k$-uniform hypergraphs.

\begin{lemma} \label{Erdos}
Let $K_k^{(k)}(s_1, \dots, s_k)$ be the complete $k$-partite $k$-uniform hypergraph with partite sets of size $s_1, \dots, s_k$. Then 
\[\textrm{ex}(K_n^{(k)}, K_k^{(k)} (s_1, \dots, s_k)) = O(n^{k - \delta}),\]
where $\delta = \left(\prod_{i=1}^{k-1} s_i \right)^{-1}$.
\end{lemma}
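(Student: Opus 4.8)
The plan is to prove Lemma~\ref{Erdos} by the standard double-counting (Kővári–Sós–Turán-style) argument adapted to $k$-uniform hypergraphs. Let $\mathcal{G}$ be a $k$-uniform hypergraph on $n$ vertices with no copy of $K_k^{(k)}(s_1,\dots,s_k)$, and suppose it has $m$ edges. I would count configurations consisting of a ``stem'' together with a common neighbourhood. More precisely, order the partite classes so that the product $\prod_{i=1}^{k-1} s_i$ matches the exponent in $\delta$, and count the number $N$ of $(k-1)$-tuples $(A_1,\dots,A_{k-1})$ together with a vertex $v$ such that $A_i$ is a set of $s_i$ vertices and $A_1\cup\dots\cup A_{k-1}\cup\{v\}$ carries all the edges one gets by picking one vertex from each $A_i$ together with $v$. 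The absence of $K_k^{(k)}(s_1,\dots,s_k)$ will mean that no choice of $A_1,\dots,A_{k-1}$ has $s_k$ or more common extensions $v$, which bounds $N$ from above by roughly $s_k \binom{n}{s_1}\cdots\binom{n}{s_{k-1}} = O(n^{s_1+\dots+s_{k-1}})$.

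For the lower bound on $N$, I would fix a vertex $v$ and look at its ``link'', the $(k-1)$-uniform hypergraph $\mathcal{G}_v$ whose edges are the sets $e\setminus\{v\}$ for $e\in\mathcal{G}$ containing $v$. Each vertex lies in $d(v)$ edges on average $km/n$ of them, so $\sum_v |E(\mathcal{G}_v)| = km$. The contribution of $v$ to $N$ is the number of ways to choose a ``complete $(k-1)$-partite tuple'' $(A_1,\dots,A_{k-1})$ inside the link such that every transversal is an edge of $\mathcal{G}_v$; iterating a convexity (Jensen) estimate across the $k-1$ classes shows this is at least a constant times $|E(\mathcal{G}_v)|^{\prod s_i}/n^{(\text{something})}$, so that summing over $v$ and applying convexity once more in $v$ gives $N \gtrsim n\cdot (m/n^{k-1})^{\prod_{i=1}^{k-1}s_i}$ up to constants. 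The cleanest way to run this is by induction on $k$: the base case $k=2$ is exactly Kővári–Sós–Turán, and the inductive step treats the link hypergraphs as $(k-1)$-uniform objects, invoking the induction hypothesis to count complete $(k-1)$-partite sub-configurations in each link.

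Combining the two bounds, $n\,(m/n^{k-1})^{\prod_{i=1}^{k-1}s_i} \lesssim n^{s_1+\dots+s_{k-1}}$, and solving for $m$ yields $m = O(n^{k-\delta})$ with $\delta = (\prod_{i=1}^{k-1}s_i)^{-1}$, as required. The main obstacle I expect is the bookkeeping in the lower-bound counting: one must be careful that the convexity step is applied to the right quantity at each level and that the exponents telescope correctly to $\prod_{i=1}^{k-1}s_i$ rather than $\prod_{i=1}^k s_i$ — the asymmetry (only $k-1$ of the classes enter $\delta$) reflects that we reserve the last class for the common-neighbour count and do not pay for it in the exponent. Setting up the induction so that this asymmetry is preserved, and verifying the constants remain bounded as the recursion unwinds, is the delicate part; the rest is routine manipulation of binomial coefficients.
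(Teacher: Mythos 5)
The paper contains no proof of Lemma~\ref{Erdos} for you to be compared against: it is invoked as a black box, with a citation to Erd\H{o}s \cite{E64}. So what you are really doing is re-deriving the Erd\H{o}s box theorem, and your strategy is indeed the standard one (the hypergraph extension of the K\H{o}v\'ari--S\'os--Tur\'an argument \cite{KST54}, essentially Erd\H{o}s's original proof): count the tuples $(A_1,\dots,A_{k-1},v)$ spanning a copy of $K_k^{(k)}(s_1,\dots,s_{k-1},1)$, bound this count $N$ above using $K_k^{(k)}(s_1,\dots,s_k)$-freeness, and below via links and iterated convexity. Your upper bound $N \leq (s_k-1)\binom{n}{s_1}\cdots\binom{n}{s_{k-1}}=O(n^{s_1+\dots+s_{k-1}})$ is correct as stated (note the required disjointness of the classes comes for free, since a transversal meeting two classes in the same vertex would have fewer than $k$ elements and so could not be an edge).

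The lower bound, however, is wrong as you state it, and this is not a removable bookkeeping issue: your two displayed inequalities do not combine to give the theorem. Write $\Sigma=\sum_{i=1}^{k-1}s_i$ and $P=\prod_{i=1}^{k-1}s_i$. The claim $N\gtrsim n\,(m/n^{k-1})^{P}$ fails already for the complete $k$-uniform hypergraph, where $m\approx n^k/k!$ and the true count is $N\approx n^{1+\Sigma}$, while your bound would assert $N\gtrsim n^{1+P}$ (up to constants), which is larger whenever $P>\Sigma$ (e.g.\ $k=3$, $s_1=s_2=3$). The correct estimate, obtained by carrying the normalization through each convexity step, is
\[
N \;\gtrsim\; n^{1+\Sigma}\left(\frac{m}{n^{k}}\right)^{P},
\]
that is, (number of potential tuples) times (edge density) raised to the number of edges of the pattern $K_k^{(k)}(s_1,\dots,s_{k-1},1)$. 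This does combine with $N=O(n^{\Sigma})$ to give $(m/n^k)^{P}\lesssim n^{-1}$, hence $m=O(n^{k-1/P})$ as required. By contrast, solving your inequality $n\,(m/n^{k-1})^{P}\lesssim n^{\Sigma}$ gives $m\lesssim n^{k-1+(\Sigma-1)/P}$, which agrees with $n^{k-1/P}$ only when $\Sigma=P$ (e.g.\ $k=2$, which is presumably why the formula looks right if one only sanity-checks the K\H{o}v\'ari--S\'os--Tur\'an case); for $\Sigma<P$ it is a strictly stronger claim, and one that contradicts simple probabilistic deletion constructions for suitable parameters, so no correct argument can produce it. A second, smaller point: for your induction on $k$ to close, the inductive hypothesis must be the counting (supersaturation) form of the statement --- a lower bound on the \emph{number} of copies of $K_{k-1}^{(k-1)}(s_1,\dots,s_{k-1})$ in each link in terms of its density --- not merely the extremal bound; in particular the base case is not K\H{o}v\'ari--S\'os--Tur\'an itself but its standard counting strengthening.
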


This yields the following, more precise, version of Theorem \ref{IntroMain}.

\begin{corollary}
Let $H$ be a subgraph of the cube which admits a $k$-partite representation, where the partite sets have sizes $s_1, \dots, s_k$. Then 
\[\textrm{ex}(Q_n, H) = O(2^n n^{1 - \frac{\delta}{k}}),\]
where $\delta = \left(\prod_{i=1}^{k-1} s_i \right)^{-1}$.
\end{corollary}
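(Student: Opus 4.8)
The plan is to obtain the corollary as a direct combination of Theorem~\ref{Main} and Erd\H{o}s' bound, Lemma~\ref{Erdos}, the only wrinkle being that the relevant extremal parameter $\alpha$ will now depend on $n$.

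First I would observe that the representation $\mathcal{H}$ is, by definition, a $k$-partite $k$-uniform hypergraph whose partite sets have sizes $s_1, \dots, s_k$. It is therefore a subhypergraph of the complete $k$-partite $k$-uniform hypergraph $K_k^{(k)}(s_1, \dots, s_k)$ on partite sets of exactly those sizes. Consequently every hypergraph that contains a copy of $K_k^{(k)}(s_1, \dots, s_k)$ also contains a copy of $\mathcal{H}$, so every $\mathcal{H}$-free hypergraph is $K_k^{(k)}(s_1, \dots, s_k)$-free, whence
\[\textrm{ex}(K_n^{(k)}, \mathcal{H}) \leq \textrm{ex}(K_n^{(k)}, K_k^{(k)}(s_1, \dots, s_k)) = O(n^{k - \delta}),\]
where the final equality is exactly Lemma~\ref{Erdos} with $\delta = (\prod_{i=1}^{k-1} s_i)^{-1}$. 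Writing this bound as $\textrm{ex}(K_n^{(k)}, \mathcal{H}) \leq \alpha n^k$, we may take $\alpha = C n^{-\delta}$ for a suitable constant $C$ depending only on $\mathcal{H}$ and the implied constant above. Feeding this value of $\alpha$ into Theorem~\ref{Main} then yields
\[\textrm{ex}(Q_n, H) = O(\alpha^{1/k} 2^n n) = O(n^{-\delta/k} 2^n n) = O(2^n n^{1 - \delta/k}),\]
as required.

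The one point that needs care, and which I expect to be the only genuine obstacle, is that Theorem~\ref{Main} was stated for a fixed constant $\alpha$, whereas here $\alpha = C n^{-\delta}$ decays with $n$. I would therefore verify that the proof of Theorem~\ref{Main} goes through verbatim for this $n$-dependent choice. Setting $\e = 16 k \alpha^{1/k}$ as in that proof, the crucial counting step requires only that the number of edges of the auxiliary hypergraph $\mathcal{I}$, which is at least $(\e/16)^k n^k / k! = k^k \alpha n^k / k! \geq \alpha n^k$, exceed $\textrm{ex}(K_n^{(k)}, \mathcal{H})$; this holds for every $\alpha$, constant or not, since $k^k \geq k!$.

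The remaining inequalities invoked in that proof, namely $n \geq \alpha^{-1/k}$ and $j \geq n/4 \geq 4k/\e$, reduce to $n \geq C^{-1/k} n^{\delta/k}$, which is satisfied for all large $n$ because $\delta \leq 1$ forces $\delta/k \leq 1/k < 1$ in every case of interest $k \geq 2$. Thus the decaying $\alpha$ causes no difficulty, the embedding argument of Theorem~\ref{Main} produces a copy of $H$ once $G$ has density exceeding a constant multiple of $n^{-\delta/k}$, and the corollary follows.
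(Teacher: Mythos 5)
Your proposal is correct and follows exactly the paper's intended route: the paper derives this corollary by noting that $\mathcal{H} \subseteq K_k^{(k)}(s_1,\dots,s_k)$, applying Lemma~\ref{Erdos} to get $\textrm{ex}(K_n^{(k)},\mathcal{H}) = O(n^{k-\delta})$, and feeding $\alpha = O(n^{-\delta})$ into Theorem~\ref{Main}. Your additional verification that the proof of Theorem~\ref{Main} tolerates an $n$-dependent $\alpha$ (via the inequalities $k^k \geq k!$ and $n \geq \alpha^{-1/k}$) is a point the paper leaves implicit, and you resolve it correctly.
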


\section{Concluding remarks}

$\bullet$ It still remains to decide whether $\textrm{ex}(Q_n, C_{10}) = o(e(Q_n))$. It even remains open to decide whether there is any graph $H$ which is Ramsey with respect to the cube but which does not have zero Tur\'an density. We have also been unable to determine whether there are graphs with zero Tur\'an density which do not have $k$-partite representations. We conjecture that these three collections of graphs, those with $k$-partite representations, those with zero Tur\'an density and those which are Ramsey with respect to the cube, are all distinct.

$\bullet$ A quantitative version of Chung's result regarding the appearance of cycles in cubes states that, for $t$ even, $\textrm{ex}(Q_n, C_{2t}) = O(2^n n^{\frac{1}{2} + \frac{1}{t}})$. This result follows easily from Theorem \ref{Main} and the fact that the $2$-partite representation of $C_{2t}$ is the graph $C_t$. The one additional ingredient necessary to complete the proof is the Bondy-Simonovits theorem \cite{BS74}, that $\textrm{ex}(K_n, C_t) = O(n^{1 + \frac{2}{t}})$ for $t$ even.

On the other hand, for every $t$, an application of the Lov\'asz local lemma implies that there is a subgraph $G_t$ of $Q_n$ with $\Omega(2^n n^{\frac{1}{2} + \frac{1}{2t}})$ edges which does not contain a copy of $C_{2t}$. For $t$ even, we believe that the upper bound is tight but have been unable to make any progress towards proving this.

$\bullet$ For odd values of $t$, the behaviour of the function $\textrm{ex}(Q_n, C_{2t})$ is even more obscure. F\"uredi and \"Ozkahya \cite{FO09, FO209} give an upper bound of the form $\textrm{ex}(Q_n, C_{2t}) = O(2^n n^{\frac{6}{7}})$ for all odd values of $t$ with $t \geq 7$. If the $3$-partite $3$-uniform graph $\mathcal{E}_t$ that represents $C_{2t}$ satisfies $\textrm{ex}(K_n^{(3)}, \mathcal{E}_t) = O(n^{2+\delta})$, then Theorem \ref{Main} would imply that $\textrm{ex}(Q_n, C_{2t}) = O(2^n n^{\frac{2}{3} + \frac{\delta}{3}})$, improving the result of F\"uredi and \"Ozkahya for $\delta$ sufficiently small. While such an improved estimate almost certainly holds for large $t$, we have not pursued this direction.

\vspace{0.1cm} \noindent {\bf Acknowledgments.}\, I would like to thank Eoin Long for reading carefully through an earlier
version of this note.

\end{document}